\pgfplotsset{compat=newest}
\newtheorem{theorem}{Theorem}[section]
\theoremstyle{definition}
\theoremstyle{remark}
\DeclareMathOperator{\tr}{Tr}
\numberwithin{equation}{section}
\title{On the Optimal Ergodic Sublinear Convergence Rate of the Relaxed Proximal Point Algorithm for Variational Inequalities}
\author{%
Guoyong Gu%
\thanks{Department of Mathematics, Nanjing University, Nanjing, 210093, China.}
\thanks{Email: ggu@nju.edu.cn. This author was supported by the NSFC grant 11671195.}
\and
Junfeng Yang\footnotemark[1]
\thanks{Email: jfyang@nju.edu.cn. This author was supported by the NSFC grant  11771208.}
}
\date{}
\begin{document}
\maketitle

\begin{abstract}
This paper investigates the optimal ergodic sublinear convergence rate of the relaxed proximal point algorithm for solving monotone variational inequality problems. The exact worst case convergence rate is computed using the performance estimation framework. It is observed that, as the number of iterations getting larger, this numerical rate asymptotically coincides with an existing sublinear rate, whose optimality is unknown. This hints that, without further assumptions, sublinear convergence rate is likely the best achievable rate. A concrete example is constructed, which provides a lower bound for the exact worst case convergence rate. Amazingly, this lower bound coincides with the exact worst case bound computed via the performance estimation framework. This observation motivates us to conjecture that the lower bound provided by the example is exactly the worse case iteration bound, which is then verified theoretically. We thus have established an ergodic sublinear convergence rate that is optimal in terms of both the order of the sublinear rate and  all the constants involved.


\bigskip

\noindent\textbf{Keywords:}
proximal point algorithm,
performance estimation framework,
sublinear convergence rate,
optimal iteration bound
\end{abstract}

\section{Introduction}
\label{Sec:Introduction}

The framework of \emph{Proximal Point Algorithm} (PPA), which dated back to \cite{Mor65bsmf} and was firstly introduced to the optimization community in \cite{Mar70}, has been playing fundamental roles both theoretically and algorithmically in the optimization area, see, e.g., ~\cite{Roc76sicon,Gul91sicon,Gul92siopt,YF00siopt, GHY14coa} for a few of seminal works.
Even the convergence rate of the famous alternating direction method of multipliers is sometimes established within the framework of PPA, see, e.g., ~\cite{HY12sinum}.

Drori and Teboulle \cite{DT14mp} were the first to consider the notion of performance estimation problem.
The performance estimation problem is expressed as a non-convex quadratic matrix problem \cite{Bec06siopt}, which is further relaxed and dualized.
As a consequence, only upper bounds on the worst case performance can be provided.
Subsequently, Kim and Fessler~\cite{KF16MPA,KF17JOTA,KF18SIOPTa,KF18SIOPTb,KF18arxiv} used the idea of  performance estimation problem to establish analytically tighter or optimal upper bounds for either existing or newly proposed accelerated gradient methods for smooth or composite convex minimization problems. 
Later, by using the concept of convex interpolation and smooth (strongly) convex interpolation, the performance estimation problem is transformed into a semidefinite optimization problem without any relaxation by Taylor et al.~\cite{THG17mp,THG17siopt,dKGT17ol,THG18jota}.  See also recent progress on the proximal point method for maximal monotone operator inclusion problems by using the performance estimation framework \cite{GY19nonergodic,Kim19}.

This paper investigates the optimal ergodic sublinear convergence rate of the relaxed PPA for monotone variational inequality problems. Our analysis is motivated by the coincidence of the exact worst case convergence rate computed via the performance estimation framework and the one reached by a concrete example.
The rest of the paper is organized as follows.
In the next section, we briefly review the conceptual relaxed PPA for solving monotone variational inequality problems.
The performance estimation framework is used to compute the exact worst case convergence rate in Section~3, which is followed by a concrete example providing lower bound for the exact worst case convergence rate in Section~4.
The optimal ergodic sublinear convergence rate is established in Section~5
and, finally, some concluding remarks are given in Section~6.

\section{Conceptual relaxed PPA}

Denote the set of monotone and continuous
mappings on $\mathbb{R}^n$ by $\mathcal{M}$.
Let $F\in\mathcal{M}$ and $\Omega \subseteq \mathbb{R}^n$ be a nonempty closed convex set.
We consider the monotone variational inequality problem $\mathrm{VI}(F,\Omega)$: find $w^* \in \Omega$ such that
\begin{equation}
\label{VI}
\tag{VI$(F,\Omega)$}
(w-w^*)^T F(w^*) \geq 0,  \quad \forall w\in\Omega.
\end{equation}

Let $H\in \mathbb{S}^{n}_{++}$ be a real symmetric positive definite matrix of order $n$ and $\lambda \in (0,2)$ be a parameter.
One iteration of our conceptual \emph{Relaxed PPA} (RPPA), which generates $w_{k+1}$ from $w_k$ via the intermediate point $\tilde{w}_{k}$, is given in Algorithm~\ref{Alg:RPPA},
which is a special case of the one considered in \cite{GHY14coa}.
\begin{algorithm}
\caption{An iteration of the conceptual  RPPA for solving VI$(F,\Omega)$}
\label{Alg:RPPA}
\smallskip

\textbf{PPA step:} generate $\tilde{w}_k \in\Omega$ via solving
\begin{equation}
\label{RPPA}
(w-\tilde{w}_{k})^T\big[F(\tilde{w}_{k}) + H(\tilde{w}_{k}-w_k)\big] \geq 0,\
\forall w \in \Omega.
\end{equation}
\textbf{Relaxation step:} generate the new iterate $w_{k+1}$ via
\begin{equation}
\label{RStep}
w_{k+1}= w_k+\lambda(\tilde{w}_k-w_k).
\end{equation}
\end{algorithm}

\noindent Since $F$ is continuous, VI$(F,\Omega)$ is equivalent to find $w^*\in\Omega$ such that
\[
(w-w^*)^T F(w) \geq 0,  \quad \forall w\in\Omega,
\]
see, e.g., \cite{FacP-book-vol1,Nest09MPB}.
Based on this fact, $O(1/N)$ ergodic rate of convergence have been established in \cite{HY12sinum, GHY14coa}, where $N$ denotes the iteration number.
Here the sublinear convergence rate is recalled without a proof.
\begin{theorem}
\label{complexthem}
Let $w_0\in\mathbb{R}^n$ be any initial point, $H\in \mathbb{S}^{n}_{++}$ and $\lambda \in (0,2)$. For all $k\geq 0$,  let $\tilde{w}_k$ be generated by the PPA step \eqref{RPPA}
and $w_{k+1}$ be given by the relaxation step \eqref{RStep}.
Then, for any integer $N>0$,  $\bar{w}_N={1\over N+1}\sum_{k=0}^N\tilde{w}_k$ satisfies
\begin{equation*}
(\bar{w}_N-w)^TF(w) \leq{1\over 2\lambda(N+1)}\|w-w_0\|_H^2,\quad \forall w\in\Omega.
\end{equation*}
\end{theorem}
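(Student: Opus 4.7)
The plan is to combine the PPA step's variational inequality with the monotonicity of $F$, and then to extract a telescoping sum by exploiting the form of the relaxation step.

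First, for an arbitrary $w\in\Omega$, I would rewrite \eqref{RPPA} in the equivalent form
\[
(\tilde{w}_k - w)^T F(\tilde{w}_k) \le (\tilde{w}_k - w)^T H(w_k - \tilde{w}_k),
\]
and then invoke the monotonicity of $F$, which gives $(\tilde{w}_k - w)^T F(\tilde{w}_k) \ge (\tilde{w}_k - w)^T F(w)$, to conclude
\[
(\tilde{w}_k - w)^T F(w) \le (\tilde{w}_k - w)^T H(w_k - \tilde{w}_k).
\]
The replacement of $F(\tilde{w}_k)$ by the $k$-independent quantity $F(w)$ is the decisive move, because it is exactly what makes clean averaging in $k$ possible at the end.

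Second, I would use the relaxation step \eqref{RStep}, i.e.\ $w_{k+1}-w_k=\lambda(\tilde{w}_k-w_k)$, to expand $\|w-w_{k+1}\|_H^2$ around $w-w_k$. A short direct computation produces the identity
\[
2\lambda(\tilde{w}_k - w)^T H(w_k - \tilde{w}_k) = \|w - w_k\|_H^2 - \|w - w_{k+1}\|_H^2 - \lambda(2-\lambda)\|\tilde{w}_k - w_k\|_H^2.
\]
Because $\lambda\in(0,2)$, the last term on the right is non-positive, and dropping it yields
\[
(\tilde{w}_k - w)^T H(w_k - \tilde{w}_k) \le \frac{1}{2\lambda}\bigl(\|w - w_k\|_H^2 - \|w - w_{k+1}\|_H^2\bigr).
\]

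Finally, I would chain the two inequalities, sum over $k=0,\dots,N$, and discard the non-positive boundary contribution $-\|w-w_{N+1}\|_H^2$, obtaining
\[
\sum_{k=0}^{N}(\tilde{w}_k - w)^T F(w) \le \frac{1}{2\lambda}\|w - w_0\|_H^2.
\]
Dividing by $N+1$ and using the definition of $\bar{w}_N$ together with the linearity of the inner product in its first argument delivers the stated bound. The only non-routine step is the algebraic identity tying the cross term to a telescoping difference; both the restriction $\lambda\in(0,2)$ and the precise constant $1/(2\lambda)$ in the final estimate have their origin there, while everything else follows the standard monotonicity-plus-telescoping recipe for PPA.
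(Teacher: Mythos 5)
Your proof is correct: the key algebraic identity
$2\lambda(\tilde{w}_k - w)^T H(w_k - \tilde{w}_k) = \|w - w_k\|_H^2 - \|w - w_{k+1}\|_H^2 - \lambda(2-\lambda)\|\tilde{w}_k - w_k\|_H^2$
checks out, and with $\lambda\in(0,2)$ the dropped term is indeed non-positive, so the telescoping sum delivers the stated bound. The paper states Theorem~\ref{complexthem} without proof (citing \cite{HY12sinum,GHY14coa}), but your argument is exactly the first half of the paper's proof of Theorem~\ref{complexthemoptimal}: the only difference is that you discard the $\lambda(2-\lambda)\|\tilde{w}_k-w_k\|_H^2$ terms immediately, whereas the paper retains them and exploits them (via Cauchy--Schwarz and the triangle inequality) to sharpen the constant from $\frac{1}{2\lambda(N+1)}$ to $\frac{1}{2(\lambda N+2)}$.
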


\noindent For ease of reference, we refer to the factor ${1\over 2\lambda(N+1)}$ in the above sublinear bound as the performance measure factor of RPPA.

\section{Performance estimation framework}
Consider the class of problems ${\cal P}$ given by
\[
{\cal P} = \{(F,\Omega) \mid F\in{\cal M}, \; \emptyset \neq \Omega\subseteq\mathbb{R}^n \text{~is closed and convex}
\}.
\]
%
%
Fix $N>0$ and $\lambda \in (0,2)$. Given $(F,\Omega)\in{\cal P}$ and an initial point $w_0\in\mathbb{R}^n$,  RPPA generates a unique sequence of points $\{\tilde{w}_0, w_1, \tilde{w}_1, w_2, \ldots, \tilde{w}_N\}$, the procedure of which is denoted by
\begin{equation}\label{gPPA}
   \{\tilde{w}_0, w_1, \tilde{w}_1, w_2, \ldots, \tilde{w}_N\} = \text{RPPA}(F,\Omega,w_0,N).
\end{equation}
In this paper, we are interested in establishing the exact worst case complexity bound of  RPPA for the whole class of problems ${\cal P}$.
For such purpose, we consider the following performance estimation problem
\begin{eqnarray}\label{PEP0}
\varepsilon^*(N) \triangleq
  \sup_{F, \Omega, w, w_0}
  \left\{\varepsilon(F,\Omega,w,w_0) \triangleq \frac{(\bar{w}_N-w)^TF(w)}{\|w-w_0\|_H^2}
  \left|
  \begin{array}{l}
    (F,\Omega) \in {\cal P}, \;     w_0\in\mathbb{R}^n, \; w\in\Omega,  \medskip \\
      \bar{w}_N={1\over N+1}\sum_{k=0}^N\tilde{w}_k, \text{~where~} \medskip\\
     \{\tilde{w}_0,    \ldots, \tilde{w}_N\} = \text{RPPA}(F,\Omega,w_0,N).
  \end{array}
  \right.
  \right\}
\end{eqnarray}
with $F$, $\Omega$, $w$ and $w_0$ all being the optimization variables.
It follows from Theorem \ref{complexthem} that  $\varepsilon^*(N) \leq {1\over 2\lambda(N+1)}$ for all $N>0$.

\subsection{Homogeneity}
For any $R>0$, we define $G(u)=F(Ru)/R$ and $\Omega' = \Omega/R$. Clearly, $(F,\Omega)\in \mathcal{P}$ if and only if $(G,\Omega')\in \mathcal{P}$.
Furthermore, it is easy to verify that \eqref{gPPA} holds if and only if
\[\{\tilde{w}_0/R, w_1/R, \tilde{w}_1/R, w_2/R, \ldots, \tilde{w}_N/R\} = \text{RPPA}(G,\Omega',w_0/R,N).\]
It is thus without loss of generality to assume $\|w-w_0\|_H=1$ in  \eqref{PEP0} since
\begin{eqnarray*}
\varepsilon(F,\Omega,w,w_0) =  \frac{(\bar{w}_N-w)^TF(w)}{\|w-w_0\|_H^2} = \frac{(\bar{w}_N/R-w/R)^TG(w/R)}{\|w/R-w_0/R\|_H^2}
= \varepsilon(G,\Omega',w/R,w_0/R).
\end{eqnarray*}

%
%
%

\subsection{Translation invariance}
For any $w_0\in\mathbb{R}^n$, we let $G(u) = F(u+w_0)$ and $\Omega' = \Omega-w_0 = \{w - w_0: w\in\Omega\}$. It is apparent that $(F,\Omega)\in \mathcal{P}$ if and only if $(G,\Omega')\in \mathcal{P}$.
Furthermore, it is elementary to show that \eqref{gPPA} holds if and only if
\[\{\tilde{w}_0-w_0, w_1-w_0, \tilde{w}_1-w_0, w_2-w_0, \ldots, \tilde{w}_N - w_0\} = \text{RPPA}(G,\Omega',0,N).\]
Therefore, we can always choose $w_0 = 0$  in  \eqref{PEP0} without affecting the worst case bound  since
\begin{eqnarray*}
\varepsilon(F,\Omega,w,w_0) =  \frac{(\bar{w}_N-w)^TF(w)}{\|w-w_0\|_H^2} =
\frac{((\bar{w}_N-w_0)-(w-w_0))^TG(w-w_0)}{\|(w -w_0)-0\|_H^2}
= \varepsilon(G,\Omega',w-w_0,0).
\end{eqnarray*}


\subsection{The performance estimation problem}
In view of the homogeneity and translation invariance of the performance measure \eqref{PEP0},
the worst case complexity bound of the RPPA can now be reformulated as
\begin{align}
\notag
\varepsilon^*(N) =
\sup_{F, \ \Omega, \ w,\ w_0}\quad &(\bar{w}_N-w)^TF(w)=\Bigl({1\over N+1}\sum_{k=0}^N\tilde{w}_k-w\Bigr)^TF(w)\\
\tag{PEP}
\text{s.t.}\quad &(F,\Omega)\in \mathcal{P},\ \|w\|_H=1,  w\in\Omega, \\
\notag
& \{\tilde{w}_0, w_1, \tilde{w}_1, w_2, \ldots, \tilde{w}_N\} = \text{RPPA}(F,\Omega, w_0=0,N).
\end{align}
Let $N_{\Omega}(w)$ be the cone normal to $\Omega$ at $w$. Then, the first $(N+1)$ iterations of the RPPA given in Algorithm~\ref{Alg:RPPA} can be rephrased as
\begin{align}
\label{ocrppa}
H(w_k - \tilde{w}_k) \in  (F+N_{\Omega})(\tilde{w}_k),  \quad w_{k+1} = w_k+\lambda(\tilde{w}_k-w_k),\quad k=0, 1,\ldots, N.
\end{align}
Since $\Omega$ is nonempty closed and convex, it is well known that $N_{\Omega}$ is monotone (in fact, maximally monotone).
%
%
Therefore, it follows from the monotonicity of $F$ that $F+N_{\Omega}$ is also monotone.
An optimal solution to (PEP) corresponds to a monotone operator $F + N_{\Omega}$,
on which the RPPA behaves as badly as possible.
Because it involves an unknown monotone operator $F + N_{\Omega}\in \mathcal{M}$,
problem (PEP) is  infinite-dimensional.
Following the ideas in \cite{THG17siopt,THG17mp,DT16mp,RTBG18},
we will show that a completely equivalent finite-dimensional convex semidefinite optimization problem can readily be formulated.

For any set valued mapping $T$, we denote its graph by $\text{graph}(T) = \{(x,y): y\in Tx\}$.
Then, the inclusion conditions in \eqref{ocrppa} and the constraint $w\in\Omega$ can be restated as
\begin{align}\label{inclusion_conditions}
\notag
(\tilde{w}_k, H(w_k - \tilde{w}_k)) & \in  \text{graph}(F+N_{\Omega}), \quad k=0, 1,\ldots, N, \\
(w,F(w)) & \in \text{graph}(F+N_{\Omega}).
\end{align}
Since $(F+N_\Omega)$ is monotone, by definition the set of inclusions in \eqref{inclusion_conditions} imply
\begin{align}\label{interpolation_conditions}
\notag
\bigl\langle \tilde{w}_{j}-\tilde{w}_{i},\ H(w_j - \tilde{w}_j)-H(w_i - \tilde{w}_i)\bigr\rangle&\geq0,
\quad 0\leq i<j\leq N,\\
\bigl\langle \tilde{w}_i-w,\ H(w_i - \tilde{w}_i)-F(w)\bigr\rangle&\geq0,
\quad 0\leq i\leq N.
\end{align}
On the other hand, if the set of points $S := \bigl\{(w, F(w)),\ \bigl(\tilde{w}_k,\ H(w_k - \tilde{w}_k)\bigr)_{k=0, 1,\ldots, N}\bigr\}$ satisfies the set of interpolation conditions \eqref{interpolation_conditions}, then the operator  $T_S(x) := \{g \mid (x,g) \in S\}$ is monotone and satisfies
\begin{align*}
\notag
(\tilde{w}_k, H(w_k - \tilde{w}_k)) & \in  \text{graph}(T_S), \quad k=0, 1,\ldots, N, \\
(w,F(w)) & \in \text{graph}(T_S).
\end{align*}
Since $\tilde{w}_k$ is uniquely determined by $w_k$, it follows that $T_S$ is single-valued. By defining $\tilde{F}(w) := T_S(w)$, $\tilde{F}(\tilde{w}_k) := T_S(\tilde{w}_k)$ and letting $F$ be a single valued monotone extension of $\tilde{F}$, then
the set of inclusion conditions given in \eqref{inclusion_conditions} is satisfied with $\Omega$ being the convex hull of $\{\tilde{w}_k: k=0,1,\ldots,N \} \cup \{w\}$, see {\cite{Taylor17,BC17book}}.

Therefore, the infinite-dimensional constraints $(F,\Omega)\in\mathcal{P}$ and $w\in\Omega$ can be replaced with the set of inequalities \eqref{interpolation_conditions}.
As long as the dimension of $F$ is larger than or equal to $N+3$ (the dimension of the Gram matrix in the subsequent subsection), the replacement is just a reformulation while not a relaxation, which means that the exact worst-case performance measure factor $\varepsilon^*(N)$ can be computed.

\subsection{Gram representation}
In order to reduce the number of variables, we substitute $w_k$, for $k=1, 2, \ldots, N$, using relaxation step (\ref{RStep}) and $w_0=0$, namely,
\begin{align}
\notag
w_1 &=\lambda \tilde{w}_0,\\
\notag
w_2 &=(1-\lambda)\lambda \tilde{w}_0+\lambda\tilde{w}_1,\\
\notag
&\cdots\cdots\\
\label{wk}
w_k &=(1-\lambda)^{k-1}\lambda\tilde{w}_0+\cdots+(1-\lambda)\lambda\tilde{w}_{k-2}+\lambda\tilde{w}_{k-1},\\
\notag
&\cdots\cdots
\end{align}
To obtain a convex formulation for (PEP), we introduce a Gram matrix to describe the iterates and their mappings. Denoting
$$P=\bigl[H^{1/2}\tilde{w}_0, H^{1/2}\tilde{w}_1, \cdots, H^{1/2}\tilde{w}_N,H^{1/2}w, H^{-1/2}F(w)\bigr],$$
we define the symmetric $(N+3)\times (N+3)$ Gram matrix $G=P^TP\in \mathbb{S}^{N+3}$.
The constraints and the objective function in problem (PEP) can now be entirely formulated in terms of the entries of the Gram matrix $G$.

By combining (\ref{ocrppa}) and (\ref{wk}), we have
$$H\bigl[(1-\lambda)^{k-1}\lambda\tilde{w}_0+\cdots+(1-\lambda)\lambda\tilde{w}_{k-2}+\lambda\tilde{w}_{k-1}-\tilde{w}_k\bigr]\in (F+N_{\Omega})(\tilde{w}_k).$$
Thus, for $0\leq i<j\leq N$,
\begin{align*}
&\Bigl\langle \tilde{w}_j-\tilde{w}_i,\ H(w_j - \tilde{w}_j)-H(w_i - \tilde{w}_i) \Bigr\rangle\\
&=\Bigl\langle \tilde{w}_j-\tilde{w}_i,\ H\bigl[(1-\lambda)^{j-1}\lambda\tilde{w}_0+\cdots+\lambda\tilde{w}_{j-1}-\tilde{w}_j\bigr]
-H\bigl[(1-\lambda)^{i-1}\lambda\tilde{w}_0+\cdots+\lambda\tilde{w}_{i-1}-\tilde{w}_i\bigr]\Bigr\rangle\\
&=\tr\Bigl(G\Bigl\{\bigl[(1-\lambda)^{j-1}\lambda e_1+\cdots+ \lambda e_j-e_{j+1}\bigr]\\
&\qquad\qquad\qquad\qquad\qquad\qquad-\bigl[(1-\lambda)^{i-1}\lambda e_1+\cdots+ \lambda e_i-e_{i+1}\bigr]\Bigr\}(e_{j+1}-e_{i+1})^T\Bigr)\\
&\triangleq \tr(GA_{ij}).
\end{align*}
In the same vein, for $i=0, \ldots, N$,
\begin{align*}
&\Bigl\langle \tilde{w}_i-w,\ H(w_i - \tilde{w}_i)-F(w) \Bigr\rangle\\
&=\Bigl\langle \tilde{w}_i-w,\ H\bigl[(1-\lambda)^{i-1}\lambda\tilde{w}_0+\cdots+\lambda\tilde{w}_{i-1}-\tilde{w}_i\bigr]-F(w) \rangle\\
&=\tr\Bigl(G [(1-\lambda)^{i-1}\lambda e_1+\cdots+ \lambda e_i-e_{i+1}-e_{N+3}](e_{i+1}-e_{N+2})^T\Bigr)\\
&\triangleq \tr(GA_{iw}),
\end{align*}
and
\begin{align*}
\|w\|_H^2=\tr\Bigl(Ge_{N+2}e_{N+2}^T\Bigr)\triangleq \tr(GA_w).
\end{align*}
Finally, the objective function
\begin{align*}
\max\ (\bar{w}_N-w)^TF(w)
&=-\min\ (w-\bar{w}_N)^TF(w)\\
&=-\min\  \tr\Bigl(G \bigl(e_{N+2}-\frac{1}{N+1}(e_1+\cdots+e_{N+1})\bigr)e_{N+3}^T\Bigr)\\
&\triangleq -\min\  \tr(GC).
\end{align*}

\subsection{Numerical results}
We arrive at the following equivalent semidefinite optimization problem of (PEP):
\begin{align*}
\varepsilon^*(N) =
- \min\quad & \tr\bigl(G \cdot (C+C^T)/2\bigr)\\
\text{s.t.}\quad & \tr\bigl(G\cdot (A_{ij}+A_{ij}^T)/2\bigr)\geq 0, \quad 0\leq i<j\leq N,\\
& \tr\bigl(G\cdot (A_{iw}+A_{iw}^T)/2\bigr)\geq 0, \quad 0\leq i\leq N,\\
& \tr\bigl(G\cdot (A_{w}+A_{w}^T)/2\bigr)=1,\\
& G\succeq 0.
\end{align*}
We use SeDuMi \cite{Stu99oms} to solve the above semidefinite optimization problem for $N=1, 2, \ldots, 50$.
In terms of performance measure factor, the worst case iteration bound just computed is compared to the existing iteration bound given in Theorem~\ref{complexthem} for $\lambda=1.5$,
which is depicted in Figure~\ref{figpep}.
We observe that the existing iteration bound coincides with the numerical bound asymptotically, when the number of iterations $N$ gets larger.
This could hint that, without further assumptions, the existing sublinear rate ${1\over 2\lambda(N+1)}$ is likely the best achievable rate in terms of the order of the sublinear rate. In the next section, we establish a lower bound on $\varepsilon^*(N)$ via constructing a concrete example.
Although this lower bound is only slightly better than ${1\over 2\lambda(N+1)}$, it is optimal.


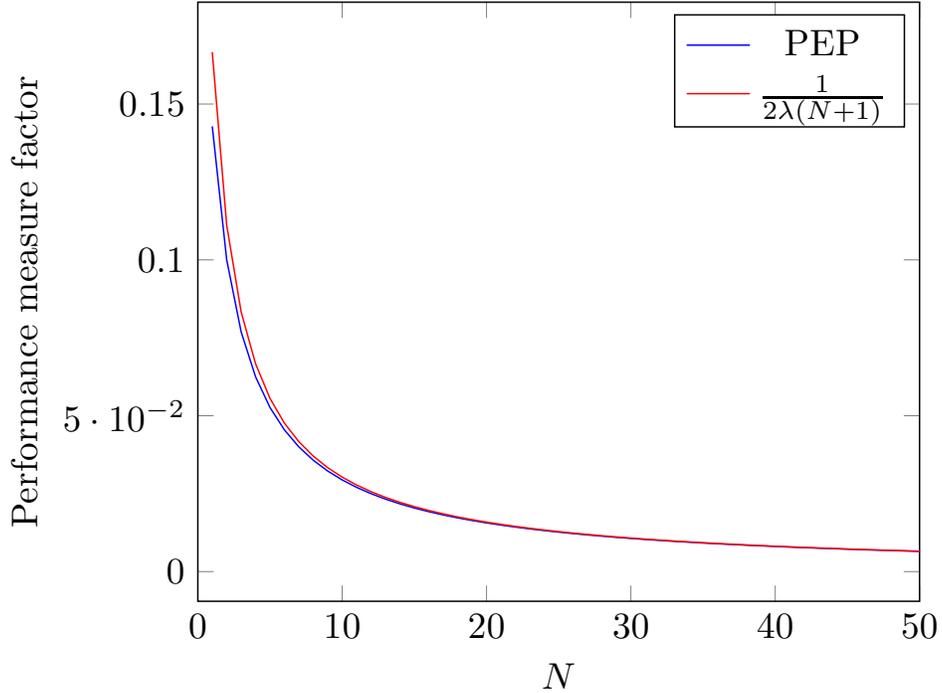
\begin{figure}
\centering\begin{tikzpicture}[scale=1.4]
\begin{axis}[xmin=0, xmax=50, xlabel={$N$}, ylabel={Performance measure factor}]
\addplot[mark=, blue] table[x=N,y=f1]{result50.dat};
\addlegendentry{PEP}
\addplot[mark=, red] table[x=N,y=f2]{result50.dat};
\addlegendentry{$\frac{1}{2\lambda(N+1)}$}
\end{axis}
\end{tikzpicture}
\caption{Comparison between the worst case bound computed by PEP (the lower curve) and the existing theoretical bound (the upper curve).
}
\label{figpep}
\end{figure}

\section{A lower bound for $\varepsilon^*(N)$}\label{sc:example}
It follows from Theorem \ref{complexthem} that the exact worst case convergence rate of RPPA, i.e., $\varepsilon^*(N)$ defined in \eqref{PEP0}, is bounded above by ${1\over 2\lambda(N+1)}$.
In this section, we construct a concrete one-dimensional example to show that $\varepsilon^*(N) \geq {1\over 2(\lambda N + 2)}$.


Let $\Omega=\mathbb{R}$, $H = 1$, $c>0$, $\delta>0$ and define
\begin{equation}\label{F}
F(w)=\begin{cases}
c, & w \in (\delta, +\infty),\\
{c \over \delta} w, & w\in [-\delta,\delta],\\
-c, & w \in (-\infty, -\delta),
\end{cases}
\end{equation}
which is clearly single-valued, monotone and continuous.
Let $N>0$ be a fixed integer and $w_0 \in (\delta, +\infty)$ be an initial point.
The idea is to choose the parameter $c>0$ sufficiently small such that $\tilde{w}_k > \delta$ for $k=0, 1, \ldots, N$.
According to Algorithm~\ref{Alg:RPPA} and the definition of $F$ in \eqref{F}, it is elementary to show that
\begin{alignat*}{3}
&c+\tilde{w}_0-w_0=0,\qquad & \tilde{w}_0&= w_0-c,\qquad  & w_1&=w_0+\lambda(\tilde{w}_0-w_0)=w_0-\lambda c,\\
&c+\tilde{w}_1-w_1=0,\qquad & \tilde{w}_1&= w_1-c,\qquad  & w_2&=w_1+\lambda(\tilde{w}_1-w_1)=w_0-2\lambda c,\\
&\quad\cdots\cdots & &\quad\cdots\cdots & &\quad\cdots\cdots \\
&c+\tilde{w}_N-w_N=0, & \tilde{w}_N&= w_N-c,  & w_{N+1}&=w_N+\lambda(\tilde{w}_N-w_N)=w_0-(N+1)\lambda c.
\end{alignat*}
Thus, we have
$$\tilde{w}_k= w_k-c = w_0-c-k\lambda c,\quad k=0, 1, \ldots, N.$$
We choose $c$ such that
\begin{equation}
\label{c}
0 < c < \frac{w_0 - \delta}{\lambda N+ 2},
\end{equation}
which guarantees $\tilde{w}_k > \delta$ for $k=1, 2, \ldots, N$.
Consequently, we have
$$\bar{w}_N=\frac{1}{N+1}(\tilde{w}_0+\tilde{w}_1+\cdots+\tilde{w}_N)
=w_0-\frac{\lambda N+2}{2}c.$$
Then, the exact worst complexity bound $\varepsilon^*(N)$ satisfies
\begin{align}\label{ex:case-0}
\varepsilon^*(N) \geq
\max_{w\neq w_0}\ \frac{\bigl\langle \bar{w}_N-w,\ F(w)\bigr\rangle}{\|w-w_0\|^2}.
\end{align}
We next separate the right hand side of \eqref{ex:case-0} into three intervals: (i) $w>\delta$, (ii) $w\in [-\delta,\delta]$, and (iii) $w<-\delta$.

Case (i): $w>\delta$. In this case, $F(w) = c$ and thus
\begin{align}\label{ex:case-1}
\max_{w_0\neq w>\delta}\ \frac{\bigl\langle \bar{w}_N-w,\ F(w)\bigr\rangle}{\|w-w_0\|^2}
=\max_{w_0\neq w>\delta} \frac{\Bigl(w_0-w-\frac{\lambda N+2}{2}c\Bigr)c}{(w_0-w)^2}.
\end{align}
When $w_0-w-\frac{\lambda N+2}{2}c<0$, i.e., $w_0-w<\frac{\lambda N+2}{2}c$, the objective is negative.
Thus, we only need to consider the case $w_0-w\geq \frac{\lambda N+2}{2}c$. Then, \eqref{ex:case-1} may be reformulated as
$$\max_{w_0\neq w>\delta}\ \Bigl[\frac{c}{w_0-w}-\frac{\lambda N+2}{2}\frac{c^2}{(w_0-w)^2}\Bigr].$$
This is a quadratic function in terms of $\frac{1}{w_0-w}$, and the maximum is attained at $\frac{1}{w_0-w}=\frac{1}{(\lambda N+2)c}$, since, in view of \eqref{c}, $w=w_0-(\lambda N+2)c > \delta$.
The maximum objective value is $\frac{1}{2(\lambda N+2)}$.
%
%

Case (ii): $w\in [-\delta,\delta]$. In this case, we have $F(w) = {c \over \delta} w$ and
\begin{equation}\label{ex:case-2}
  w_0-w-\frac{\lambda N+2}{2}c > \frac{\lambda N+2}{2}c + \delta - w \geq \frac{\lambda N+2}{2}c > 0,
\end{equation}
where the first ``$>$" follows from \eqref{c}, while the second inequality is because $w \leq \delta$.
Thus,
\begin{align*}
\max_{w\in [-\delta,\delta]}\ \frac{\bigl\langle \bar{w}_N-w,\ F(w)\bigr\rangle}{\|w-w_0\|^2}
&=\max_{w\in [-\delta,\delta]}\ \frac{\Bigl(w_0-w-\frac{\lambda N+2}{2}c\Bigr)  {c\over \delta} w }{(w_0-w)^2}\\
&\leq \max_{w\in [-\delta,\delta]}\ \frac{\Bigl(w_0-w-\frac{\lambda N+2}{2}c\Bigr)  c }{(w_0-w)^2} \\
&\leq \max_{w\neq w_0}\ \frac{\Bigl(w_0-w-\frac{\lambda N+2}{2}c\Bigr)  c }{(w_0-w)^2} \\
& = \max_{w\neq w_0}\ \Bigl[\frac{c}{w_0-w}-\frac{\lambda N+2}{2}\frac{c^2}{(w_0-w)^2}\Bigr] \\
& = \frac{1}{2(\lambda N+2)},
\end{align*}
where the first ``$\leq$" follows from \eqref{ex:case-2} and $w\leq \delta$, and the last ``$=$" can be deduced by following the same arguments about Case (i).

Case (iii): $w< -\delta$. In this case, $F(w)=-c$ and thus
\begin{align}\label{ex:case-3}
\max_{w<-\delta}\ \frac{\bigl\langle \bar{w}_N-w,\ F(w)\bigr\rangle}{\|w-w_0\|^2}
=\max_{w<-\delta}\ \frac{\frac{\lambda N+2}{2}c-(w_0-w)}{(w_0-w)^2}c.
\end{align}
In view of (\ref{c}) and $w<-\delta$, we derive
\begin{align*}
\frac{\lambda N+2}{2}c-(w_0-w) < \frac{\lambda N+2}{2}c-w_0
<  -\frac{\lambda N+2}{2}c - \delta <0,
\end{align*}
which means that \eqref{ex:case-3} is negative.

In summary, for $F$ defined in \eqref{F} and $\Omega=\mathbb{R}$, if $w_0 > \delta >0$ and $c\in(0, \frac{w_0 - \delta}{\lambda N+2})$, then $\bar{w}_N$ generated by the RPPA with $H=1$ and any $\lambda \in (0,2)$ satisfies
\begin{equation}\label{ex:final}
(\bar{w}_N-w)^TF(w) \leq{1\over 2(\lambda N+2)}\|w-w_0\|_H^2,\ \forall w\in\mathbb{R},
\end{equation}
where the inequality becomes equality for $w=w_0-(\lambda N+2)c$. For $w=w_0$, \eqref{ex:final} holds trivially because
$(\bar{w}_N-w_0)^TF(w_0) = -{(\lambda N + 2)c \over 2} < 0$.

By combining the result of this example and Theorem \ref{complexthem}, we have shown that for any integer $N>0$ the exact worst case complexity bound $\varepsilon^*(N)$ defined in \eqref{PEP0} satisfies
\[
{1\over 2(\lambda N + 2)} \leq \varepsilon^*(N) \leq {1\over 2\lambda(N  + 1)}.
\]
It is amazing to observe that the performance measure factor of the example constructed, namely ${1\over 2(\lambda N+2)}$,
coincides with the factor computed by the performance estimation framework,
with the largest difference being $4.62\times 10^{-8}$, for $N=1, 2,  3, \ldots, 100$.
This hints that the exact worst case iteration bound is likely attained on the example we just constructed.

\section{Optimal ergodic sublinear convergence rate}
An upper bound on the optimal ergodic sublinear convergence rate
is established in this section. Since this upper bound matches exactly the lower bound provided by the example in Section 4,
we have thus established the optimal ergodic sublinear convergence rate.

\begin{theorem}
\label{complexthemoptimal}
Let $w_0\in\mathbb{R}^n$ be any initial point, $H\in \mathbb{S}^{n}_{++}$ and $\lambda \in (0,2)$. For all $k\geq 0$,  let $\tilde{w}_k$ be generated by the PPA step \eqref{RPPA}
and $w_{k+1}$ be given by the relaxation step \eqref{RStep}.
Then, for any integer $N>0$,  $\bar{w}_N={1\over N+1}\sum_{k=0}^N\tilde{w}_k$ satisfies
\begin{equation*}
(\bar{w}_N-w)^TF(w) \leq{1\over 2(\lambda N+2)}\|w-w_0\|_H^2,\ \forall w\in\Omega.
\end{equation*}
\end{theorem}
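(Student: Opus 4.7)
The plan is to revisit the standard ergodic analysis underlying Theorem \ref{complexthem}, but instead of discarding the negative residual terms that naturally appear along the way, to keep them and use them to absorb a fraction of $\|w_0-w\|_H^2$, thereby sharpening the prefactor from $1/(2\lambda(N+1))$ to $1/(2(\lambda N+2))$.

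First I would derive a per-iteration descent estimate. Starting from monotonicity of $F+N_{\Omega}$ combined with the variational inequality in the PPA step \eqref{RPPA}, and expressing $\tilde{w}_k-w_k=(w_{k+1}-w_k)/\lambda$ via the relaxation step, the standard manipulation using the polarization identity yields
\begin{equation*}
(\tilde{w}_k-w)^T F(w) \leq \frac{1}{2\lambda}\bigl[\|w_k-w\|_H^2 - \|w_{k+1}-w\|_H^2\bigr] - \frac{2-\lambda}{2\lambda^2}\|w_{k+1}-w_k\|_H^2.
\end{equation*}
Summing from $k=0$ to $N$ and invoking $\sum_{k=0}^N \tilde{w}_k=(N+1)\bar{w}_N$ gives
\begin{equation*}
(N+1)(\bar{w}_N-w)^T F(w) \leq \frac{1}{2\lambda}\|w_0-w\|_H^2 - \frac{1}{2\lambda}\|w_{N+1}-w\|_H^2 - \frac{2-\lambda}{2\lambda^2}\sum_{k=0}^N \|w_{k+1}-w_k\|_H^2.
\end{equation*}
The classical proof of Theorem \ref{complexthem} discards the last two nonpositive terms at this point; the improvement must come from retaining them.

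Writing $w_0-w=(w_{N+1}-w)-(w_{N+1}-w_0)$ and applying Young's inequality $\|a-b\|_H^2\leq(1+\theta)\|a\|_H^2+(1+1/\theta)\|b\|_H^2$ together with Cauchy--Schwarz on $\|w_{N+1}-w_0\|_H^2=\bigl\|\sum_{k=0}^N(w_{k+1}-w_k)\bigr\|_H^2\leq(N+1)\sum_{k=0}^N\|w_{k+1}-w_k\|_H^2$ yields an upper bound for $\|w_0-w\|_H^2$ in terms of $\|w_{N+1}-w\|_H^2$ and $\sum_{k}\|w_{k+1}-w_k\|_H^2$. Choosing the parameter $\theta=\lambda(N+1)/(2-\lambda)$ so that the two resulting multipliers match (up to the common factor $(2-\lambda)/(2\lambda(\lambda N+2))$) the residual coefficients $1/(2\lambda)$ and $(2-\lambda)/(2\lambda^2)$ lets the entire residual swallow exactly a $(2-\lambda)/(2\lambda(\lambda N+2))$ slice of $\|w_0-w\|_H^2$. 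Plugging this back into the summed estimate collapses the coefficient algebraically to $(N+1)/(2(\lambda N+2))$, and dividing by $N+1$ delivers the claim.

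The main obstacle is identifying the correct Young parameter. Two tightness conditions must hold simultaneously, one matching the $\|w_{N+1}-w\|_H^2$ coefficient and one matching the $\sum_{k}\|w_{k+1}-w_k\|_H^2$ coefficient, and it is a small miracle of the algebra that both force the same value $\theta=\lambda(N+1)/(2-\lambda)$, which lies in $(0,\infty)$ precisely under the standing assumption $\lambda\in(0,2)$. This is consistent with the tightness already observed in Section~\ref{sc:example}: in the lower-bound example, all successive increments $w_{k+1}-w_k$ are equal, which saturates Cauchy--Schwarz, while $w_{N+1}-w$ is antiparallel to $w_{N+1}-w_0$ with ratio $\|w_{N+1}-w_0\|_H/\|w_{N+1}-w\|_H=\theta$, which saturates Young. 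This explains structurally why the upper bound produced by the above plan is exactly the lower bound $1/(2(\lambda N+2))$ and therefore optimal.
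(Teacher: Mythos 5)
Your proposal is correct and follows essentially the same route as the paper's proof: the identical per-iteration estimate, the same summation that retains the two nonpositive residual terms, and the same combination of Cauchy--Schwarz on the increments $w_{k+1}-w_k$ with the triangle relation among $w_0$, $w_{N+1}$ and $w$. Your concluding step---Young's inequality with the tuned parameter $\theta=\lambda(N+1)/(2-\lambda)$---is a repackaging of the paper's explicit minimization of $A^2+\tfrac{2-\lambda}{\lambda N}B^2$ subject to $A+B\geq 1$, and both deliver the same constant $\tfrac{2-\lambda}{\lambda N+2}$.
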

\begin{proof}
Let $k\in\{0, 1, \ldots, N\}$ and $w\in\Omega$ be arbitrarily fixed.
The monotonicity of $F$ implies that
$$\bigl\langle \tilde{w}_k-w,\ F(w) \bigr\rangle\leq \bigl\langle \tilde{w}_k-w,\ F(\tilde{w}_k)\bigr\rangle.$$
Combing (\ref{RPPA}) with the above inequality yields
\begin{align}
\notag
\bigl\langle \tilde{w}_k-w,\ F(w) \bigr\rangle
&\leq \bigl\langle \tilde{w}_k-w,\ F(\tilde{w}_k)\bigr\rangle
\leq (w-\tilde{w}_k)^TH(\tilde{w}_k-w_k)\\
\label{wwkHtwwk}
&=(w-w_k)^TH(\tilde{w}_k-w_k)-\|\tilde{w}_k-w_k\|_H^2.
\end{align}
The relaxation step (\ref{RStep}) is equivalent to $\tilde{w}_k-w_k={1\over\lambda}(w_{k+1}-w_k)$.
Substitution into inequality (\ref{wwkHtwwk}) yields
\begin{align*}
\bigl\langle \tilde{w}_k-w,\ F(w) \bigr\rangle
&\leq \frac{1}{\lambda}(w-w_k)^TH(w_{k+1}-w_k)-\frac{1}{\lambda^2}\|w_{k+1}-w_k\|_H^2\\
&=\frac{1}{2\lambda}\Bigl[\|w-w_k\|_H^2-\|w-w_{k+1}\|_H^2+\|w_{k+1}-w_k\|_H^2\Bigr]-\frac{1}{\lambda^2}\|w_{k+1}-w_k\|_H^2.
\end{align*}
Summing the above inequality over $k=0, 1, \ldots, N$, we obtain that
\begin{align}
\notag
&(N+1)\Bigl\langle {1\over N+1} \sum_{k=0}^N \tilde{w}_k-w,\ F(w)\Bigr\rangle\\
\notag
&\leq \frac{1}{2\lambda}\Bigl[\|w-w_0\|_H^2-\|w-w_{N+1}\|_H^2\Bigr]-\frac{2-\lambda}{2\lambda^2}\sum_{k=0}^N\|w_{k+1}-w_k\|_H^2\\
\notag
&\leq \frac{1}{2\lambda}\Bigl[\|w-w_0\|_H^2-\|w-w_{N+1}\|_H^2\Bigr]-\frac{2-\lambda}{2\lambda^2}\frac{\Bigl(\sum_{k=0}^N\|w_{k+1}-w_k\|_H\Bigr)^2}{N}\\
\notag
&\leq \frac{1}{2\lambda}\Bigl[\|w-w_0\|_H^2-\|w-w_{N+1}\|_H^2\Bigr]-\frac{2-\lambda}{2\lambda N^2}\|w_{N+1}-w_0\|_H^2\\
\label{Np1}
&=\frac{1}{2\lambda}\|w-w_0\|_H^2-\frac{1}{2\lambda}\|w-w_0\|_H^2
\biggl[\frac{\|w-w_{N+1}\|_H^2}{\|w-w_0\|_H^2}+\frac{2-\lambda}{\lambda N}\frac{\|w_{N+1}-w_0\|_H^2}{\|w-w_0\|_H^2}\biggr],
\end{align}
where the 2nd and the 3rd ``$\leq$" follow from the Cauchy-Schwarz and the triangle inequalities, respectively.
To simplify the notation, we denote
$$A:=\frac{\|w-w_{N+1}\|_H}{\|w-w_0\|_H}\geq0,\quad B:=\frac{\|w_{N+1}-w_0\|_H}{\|w-w_0\|_H}\geq 0,$$
then
\begin{align*}
\frac{\|w-w_{N+1}\|_H^2}{\|w-w_0\|_H^2}+\frac{2-\lambda}{\lambda N}\frac{\|w_{N+1}-w_0\|_H^2}{\|w-w_0\|_H^2}
=A^2+\frac{2-\lambda}{\lambda N}B^2.
\end{align*}
Observing that
$$A+B=\frac{\|w-w_{N+1}\|_H}{\|w-w_0\|_H}+\frac{\|w_{N+1}-w_0\|_H}{\|w-w_0\|_H}\geq
1.$$
If $B\geq 1$, we have
\begin{equation}
\label{Bgeq1}
A^2+\frac{2-\lambda}{\lambda N}B^2\geq \frac{2-\lambda}{\lambda N}\geq \frac{2-\lambda}{\lambda N+2}.
\end{equation}
If $0\leq B<1$, then $A+B\geq 1$ implies
\begin{align}
\notag
A^2+\frac{2-\lambda}{\lambda N}B^2
&\geq (1-B)^2+\frac{2-\lambda}{\lambda N}B^2
=\Bigl(1+\frac{2-\lambda}{\lambda N}\Bigr)B^2-2B+1\\
\notag
&=\Bigl(1+\frac{2-\lambda}{\lambda N}\Bigr)\biggl(B-\Bigl(1+\frac{2-\lambda}{\lambda N}\Bigr)^{-1}\biggr)^2+1-\Bigl(1+\frac{2-\lambda}{\lambda N}\Bigr)^{-1}\\
\label{Bleq1}
&\geq \frac{2-\lambda}{\lambda N+2-\lambda}\geq\frac{2-\lambda}{\lambda N+2}.
\end{align}
Combining (\ref{Bgeq1}) and (\ref{Bleq1}), we derive that
\begin{align*}
\frac{\|w-w_{N+1}\|_H^2}{\|w-w_0\|_H^2}+\frac{2-\lambda}{\lambda N}\frac{\|w_{N+1}-w_0\|_H^2}{\|w-w_0\|_H^2}
=A^2+\frac{2-\lambda}{\lambda N}B^2\geq \frac{2-\lambda}{\lambda N+2}.
\end{align*}
Incorporating the last inequality into (\ref{Np1}), we get
\begin{align*}
\notag
(N+1)\Bigl\langle\frac{1}{N+1}\sum_{k=0}^N \tilde{w}_k-w,\ F(w)\Bigr\rangle
&\leq \frac{1}{2\lambda}\|w-w_0\|_H^2 \Bigl(1 -\frac{2-\lambda}{\lambda N+2}\Bigr)\\
&=\frac{N+1}{2(\lambda N +2)}\|w-w_0\|_H^2.
\end{align*}
The convergence result follows by dividing both sides with $N+1$.
\end{proof}
Following the result of this theorem and the example constructed  in Section \ref{sc:example}, we have proved that
\[
 \varepsilon^*(N) = {1\over 2(\lambda N + 2)}.
\]
Note that this complexity bound is optimal in terms of  not only the order of $N$ but also all the constants involved.
In addition, our ideas of establishing the exact worst case complexity bound can be easily extended to the case of mixed variational inequality in \cite{GHY14coa}.

\section{Concluding remarks}
Performance estimation framework is used to compute the exact worst case ergodic sublinear convergence rate of the RPPA for monotone variational inequality problems.
The numerical complexity bound asymptotically coincides with the existing theoretical bound. A concrete example is constructed to provide a lower bound on the worst case sublinear rate.
The coincidence of the numerical bound and the lower bound on the constructed example leads us to conjecture that the lower bound is optimal, which is indeed verified theoretically.
Though the improvement of the bound, namely from $\frac{1}{2\lambda (N+1)}$ to $\frac{1}{2\lambda N+4}$, is minor, kind of asymptotic, and not in the order, it does eliminate the gap, and moreover, it is optimal  in terms of not only the sublinear order but also the constants involved.


\end{document}